\def\pmatrix{\left(\begin{matrix}}
\def\endpmatrix{\end{matrix}\right)}
\def\P{{\mathbb P}}
\def\Z{{\mathbb Z}}
\def\C{{\mathbb C}}
\def\p{\partial}
\def\e{\varepsilon}
\def\e{\epsilon}
\def\A{{\mathcal A}}
\theoremstyle{plain}
\newtheorem{prop}{Proposition}
\newtheorem{thm}{Theorem}
\theoremstyle{definition}
\newtheorem*{rem}{Remark}
\title{A special case of the $\Gamma_{00}$ conjecture}
\author{Samuel Grushevsky}
\thanks{Research is supported in part by National Science Foundation under the grant DMS-05-55867}
\address{Mathematics Department,\\ Princeton University,\\ Fine Hall, Washington Road,\\ Princeton, NJ 08544, USA}
\begin{document}
\begin{abstract}
In this paper we prove the $\Gamma_{00}$ conjecture of van Geemen and van der Geer \cite{vgvdg} under the additional assumption that the matrix of coefficients of the tangent has rank at most 2 (see theorem \ref{main} for a precise formulation). This assumption is satisfied by Jacobians (see proposition \ref{jacOk}), and thus our result gives a characterization of the locus of Jacobians among all principally polarized abelian varieties.

The proof is by reduction to the (stronger version of the) characterization of Jacobians by semidegenerate trisecants, i.e. by the existence of lines tangent to the Kummer variety at one point and intersecting it in another, proven by Krichever in \cite{krtri} in his proof of Welters' \cite{W} trisecant conjecture.
\end{abstract}

\maketitle

The Schottky problem is the question of characterizing Jacobians of algebraic curves among all ppavs --- complex principally polarized abelian varieties $(A,\Theta)$. Many approaches and solutions (or weak solutions --- those that only characterize the Jacobian locus up to other irreducible components) to the problem have been developed, via the singular locus of the theta divisor (Andreotti-Mayer \cite{AM}), representability of the curves of the minimal class (Matsusaka \cite{M} and Ran \cite{R}), the Kadomtsev-Petviashvili equation (Shiota \cite{S}), etc.

One approach that led to various characterizations of Jacobians is via the geometry of the linear system $|2\Theta|$. The linear subsystem $\Gamma_{00}\subset |2\Theta|$ is defined to consist of those sections that vanish with multiplicity at least 4 at the origin.

It was essentially known to Frobenius, and more recently proven by van Geemen and van der Geer \cite{vgvdg}, Dubrovin \cite{du}, Fay \cite{F2}, Gunning \cite{G2} that for Jacobians of Riemann surfaces all elements of $\Gamma_{00}$ vanish along $C-C$. This led van Geemen and van der Geer to make the following

\smallskip\noindent
{\bf $\bf\Gamma_{00}$ Conjecture.} (\cite{vgvdg})
{\it If for an indecomposable (not a product of lower-dimensional) ppav $(A,\Theta)$ the base locus $Bs(\Gamma_{00})\ne\lbrace 0\rbrace$, then $(A,\Theta)$ is a Jacobian.}

Conjecture 2 in \cite{vgvdg} is that $A$ is a Jacobian if $\dim Bs(\Gamma_{00})\ge 2$, but the stronger version stated above was further conjectured --- see the discussion by Donagi in \cite{do}. In \cite{vgvdg}, conjecture 1, van Geemen and van der Geer further conjectured that for a Jacobian of a curve $C$ the base locus $Bs(\Gamma_{00})$ is in fact equal to $C-C$ (only containment was known classically). This equality on the level of sets was proven for $g>4$ by Welters in \cite{W2} (he also showed that for $g=4$ the base locus of $\Gamma_{00}$ contains two more points). Izadi in \cite{I1} showed that $Bs(\Gamma_{00})=C-C$ holds scheme-theoretically.

Beyond providing a solution to the Schottky problem, the validity of the $\Gamma_{00}$ conjecture would have implications for example for characterization of hyperelliptic Jacobians by their Seshadri constants (see Debarre \cite{de}). The $\Gamma_{00}$ conjecture was proven for a generic ppav for $g=5$ or $g\ge 14$ by Beauville, Debarre, Donagi, and van der Geer in \cite{4p}. Izadi proved in \cite{I2} the conjecture for all 4-dimensional ppavs, and obtained further results on $Bs(\Gamma_{00})$ for Prym varieties in \cite{I3}.

\smallskip
In fact a slightly more precise result is known for Jacobians: from the explicit formulas for Jacobians in \cite{F2} and \cite{G2} more information can be obtained than just the statement that $C-C\subset Bs(\Gamma_{00})$ (see proposition \ref{jacOk} for the precise formulation).
In this paper we prove that this refined version of the $\Gamma_{00}$ conjecture (a certain extra condition on the linear dependence, see theorem \ref{main} for a precise formulation) characterizes Jacobians. The proof is by reducing this conjecture to a characterization of Jacobians by the validity of a certain difference-differential equation for the theta function on the theta divisor, proven by Krichever in \cite{krtri} along the way of his proof of the semidegenerate case of the trisecant conjecture.

\section{$\Gamma_{00}$ conjecture as a condition on the Kummer variety}
We now review the terminology and notations regarding the $|2\Theta|$ linear system and the Kummer variety, and formulate the precise version of the $\Gamma_{00}$ conjecture that we prove. There are no new results in this section; we gather the notations, and give proofs for completeness.

Given a ppav $(A,\Theta)$ the Kummer map is the embedding
$$
  |2\Theta|:A/\pm 1\hookrightarrow \P^{2^g-1}
$$
and the Kummer variety is the image of this map. We denote the Kummer variety $K(A)$ and the Kummer embedding map by $K$. It is often convenient to choose a specific basis of the space of sections $H^0(A,2\Theta)$. If the abelian variety is given as $A=\C^g/\Z^g+B\Z^g$ for some symmetric $g\times g$ matrix $B$ with positive-definite imaginary part, and the polarization $\Theta$ is the divisor of the theta function
$$
 \theta(B,z):=\sum\limits_{n\in \Z^g} e^{2\pi i(z,m)+\pi i(B n,n)},
$$
where $(\cdot,\cdot)$ denotes the complex scalar product (not Hermitian!) of vectors in $\C^g$,  then the basis of $H^0(A,2\Theta)$ consists of
$$
  \Theta[\e](B,z):= \sum\limits_{n\in\Z^g} e^{2\pi i\left(2n+\e,z\right)+\pi i \left(2n+\e,B(n+\frac{\e}{2})\right)}
$$
for all $\e\in(\Z/2\Z)^g$. We will suppress $B$ in the notations, as it will not vary in our discussion.

\smallskip
Fay \cite{F} and Gunning \cite{G,G2} observed that the Kummer image of a Jacobian has many trisecant lines. Gunning \cite{G} proved that the existence of a certain one-dimensional family of trisecants characterizes Jacobians; Welters \cite{W} proved that a jet of such a curve suffices, and conjectured that the existence of a single trisecant already characterizes Jacobians. In view of this conjecture one can also ask whether degenerate trisecants characterize Jacobians: here the most degenerate trisecant corresponds to the case of a flex line, and the semidegenerate case is that of a line tangent to the Kummer image and intersecting it in some other point. The flex line case of the trisecant conjecture was proven by Krichever in \cite{krflex}, and the remaining two cases --- in \cite{krtri}.

\smallskip
Notice that all sections of $|2\Theta|$ are even functions on the universal cover $\C^g$ of the abelian variety. Thus a section of $|2\Theta|$ vanishes at zero to order 4 if and only if it vanishes together with all its second derivatives, as the first and third order derivatives are automatically zero. We thus have
$$
   \Gamma_{00}=\lbrace f\in |2\Theta|\ :\ 0=f(0)=\p_{z_i}\p_{z_j}f (0)=0\ \forall i,j\rbrace,
$$
where we denote differentiation by $\p_z=\frac{\p}{\p z}$.
Since any section $f\in|2\Theta|$ is a linear combination $f=\sum f_\e\Theta[\e]$, the conditions for $f$ to lie in $\Gamma_{00}$ are linear conditions on the coefficients $f_\e$, and we get
$$
  Bs(\Gamma_{00})=\left\lbrace z\in A\ :\  K(z)\in\left\langle K(0),\p_{z_i}\p_{z_j}K(0)\right\rangle_{\rm linear\ span}\right\rbrace.
$$
Thus we get a reformulation (given in \cite{vgvdg})

\smallskip\noindent
{\bf $\bf\Gamma_{00}$ Conjecture} (equivalent reformulation). {\it If for an indecomposable ppav $(A,\Theta)$ there exists a point $P\in A\setminus\lbrace0\rbrace$, and some numbers $c, c_{ij}\in\C$, for $1\le i,j\le g$ such that
$$
 K(P)=c K(0)+\sum\limits_{i,j=1}^g c_{ij}\p_{z_i}\p_{z_j} K(0),
$$
then $(A,\Theta)$ is a Jacobian.}

Notice that since for taking partial derivatives the order of operations does not matter, we can assume that the matrix $c_{ij}$ is symmetric.
Moreover, is is known that for Jacobians the matrix $c_{ij}$ is in fact of rank 1 (or if we want it to be symmetric, of rank 2). In our notations the result is the following

\begin{prop}[{\rm Dubrovin \cite{du}, Fay \cite{F2} theorem 2.5, van Geemen and van der Geer \cite{vgvdg} proposition 2.1, Gunning \cite{G2} corollary 1, essentially known to Frobenius}]\label{jacOk}
Choose any points $p,q$ on the Abel-Jacobi image of a curve $C$ in its Jacobian $Jac(C)$, and let $U,V$ be the tangent vectors to the image of $C$ at $p$ and $q$ respectively. Then there exist constants $b,c\in\C$ such that
$$
 K(p-q)=c K(0)+b\p_U\p_V K(0).
$$
\end{prop}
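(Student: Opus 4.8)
The plan is to dualize the claimed linear relation among the Kummer coordinates into a single scalar functional identity for $\theta$, and then to recognize that identity as (a degeneration of) Fay's trisecant formula. The first step is purely formal. Since the $2^g$ functions $\Theta[\e]$ form a basis of $H^0(A,2\Theta)$, they are linearly independent, so the functionals $L_w:(v_\e)\mapsto\sum_\e v_\e\Theta[\e](w)$ separate points of $\C^{2^g}$ as $w$ varies: a vector vanishes iff it is killed by every $L_w$. Hence the asserted equality of Kummer vectors
$$K(p-q)=c\,K(0)+b\,\p_U\p_V K(0)$$
holds if and only if, after applying $L_w$ to each coordinate, the two sides agree as functions of $w$.

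To carry this out I would use the second-order theta addition formula
$$\sum_{\e\in(\Z/2\Z)^g}\Theta[\e](w)\,\Theta[\e](s)=\theta(w+s)\,\theta(w-s)$$
(any universal constant cancels below; with the normalization above one has $\Theta[\e](z)=\tt{\e/2}{0}(2z;2B)$, so the constant is exactly $1$). Applying $L_w$ to $K(0)$ gives $\theta(w)^2$ and to $K(p-q)$ gives $\theta(w+p-q)\,\theta(w-p+q)$. For the derivative term I would differentiate the addition formula in the variable $s$ along $U$ and $V$ and set $s=0$; keeping track of the two sign changes coming from $\theta(w-s)$ yields
$$\p_U\p_V\big|_{s=0}\,\theta(w+s)\theta(w-s)=2\bigl(\theta(w)\,\p_U\p_V\theta(w)-\p_U\theta(w)\,\p_V\theta(w)\bigr).$$
Thus the proposition becomes equivalent to the scalar identity
$$\theta(w+p-q)\,\theta(w-p+q)=c\,\theta(w)^2+2b\bigl(\theta(w)\,\p_U\p_V\theta(w)-\p_U\theta(w)\,\p_V\theta(w)\bigr)$$
for all $w\in\C^g$, or, dividing by $\theta(w)^2$, to $\theta(w+p-q)\theta(w-p+q)/\theta(w)^2=c+2b\,\p_U\p_V\log\theta(w)$.

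It remains to prove this identity for a Jacobian, and here I would invoke Fay's trisecant formula. Writing $p,q$ for points of $C$ and $U=(\w_i(p))$, $V=(\w_i(q))$ for the tangent vectors of the Abel-Jacobi image at $p$ and $q$, the identity above is precisely the second-order degeneration of the general (four-point) trisecant identity obtained by letting the four points collide in two coincident pairs, at $p$ and at $q$: the confluence of each pair produces the mixed second logarithmic derivative $\p_U\p_V\log\theta(w)$, and matching the leading behaviour of the prime form in the limit determines the constants, giving $b$ and $c$ explicitly in terms of $E(p,q)$ and the values of $\theta$ and its derivatives at $\int_q^p\w$. (In the degenerate case $p=q$, where $U=V$ and $p-q=0$, one takes $c=1$, $b=0$.) Since $b,c$ are independent of $w$ and both sides are holomorphic, the identity extends across the zero locus of $\theta(w)$ by continuity, which is all that is needed to conclude the vector relation.

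I expect the only substantial step to be this last one. The reduction via $L_w$ and the addition formula is mechanical, whereas the degenerate limit of Fay's identity requires careful bookkeeping of the prime-form normalizations and of the even/odd cancellations in order to land on exactly the combination $\theta\,\p_U\p_V\theta-\p_U\theta\,\p_V\theta$ and to pin down $b$ and $c$. The genus-one model $\theta(w+a)\theta(w-a)/\theta(w)^2=\text{const}-\text{const}\cdot\wp(w)$ serves as a useful sanity check on the shape of the answer.
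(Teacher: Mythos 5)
Your proposal is correct and follows essentially the same route as the paper: both prove the relation by degenerating Fay's trisecant identity, letting the four points collide in two pairs (at $p$ and at $q$) so that the doubled tangencies produce the term $\theta\,\p_U\p_V\theta-\p_U\theta\,\p_V\theta$, and both leave the bookkeeping of that confluence (prime form normalizations, Taylor expansions) at the level of a sketch, just as the paper's own proof does. The only difference is organizational: you first dualize the Kummer-vector relation into a scalar identity for $\theta$ via the addition theorem and then degenerate Fay's formula in its scalar form, whereas the paper degenerates the collinearity statement directly on the Kummer side in two sequential steps and performs the addition-theorem translation only later, in Section 2, for the proof of the main theorem.
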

\begin{proof}
We sketch the proof of this proposition to emphasize the relation (but a priori not an equivalence) of this statement and the existence of trisecants. To prove this, one uses a suitable degeneration of Fay's trisecant formula \cite{F,G}. Indeed, it is known that for arbitrary points $p,p_1,p_2,p_3$ on the Abel-Jacobi image of a curve the 3 points
$$
 K\left(\frac{p+p_1-p_2-p_3}{2}\right),
 K\left(\frac{p+p_2-p_3-p_1}{2}\right),
 K\left(\frac{p+p_3-p_1-p_2}{2}\right)
$$
are collinear, i.e. linearly dependent. The ``semidegenerate'' case of this conjecture is when such a trisecant line degenerates to a tangent line to the Abel-Jacobi image at some point, intersecting it also in some other point, i.e. is the case when, say, $p_2,p_3\to q$ for $p,p_1$ fixed. In this case the limiting statement is that
$$
 K\left(\frac{p+p_1-2q}{2}\right), K(p-p_1), \p_U K(p-p_1)
$$
are linearly dependent, where $U$ is the tangent vector at $q$ to $C\subset Jac(C)$. If we further degenerate this condition as $p\to p_1$, and again take the first term in the Taylor expansion of $\p_U K(p-p_1)$ (which, notice, vanishes at 0 by parity), in the limit we get the statement that
$$
  K(p-q), K(0), \p_V\p_U K(0),
$$
are collinear, where $V$ is the tangent vector at $p$ to $C\subset Jac(C)$, which is exactly the statement of the proposition.
\end{proof}

\begin{rem}
Note that while we can degenerate a family of trisecants, there is no a priori reason why the degenerate condition would imply the less degenerate one, and thus it is not at all clear that the statement of the $\Gamma_{00}$ conjecture implies the existence of (semidegenerate) trisecants. Similarly, there is no a priori way to show that the existence of just one less degenerate trisecant implies the existence of any more degenerate ones.
\end{rem}

The purpose of this article is to prove that this version of the $\Gamma_{00}$ conjecture --- if we additionally require the symmetric matrix $\lbrace c_{ij}\rbrace$ to have rank at most two --- characterizes Jacobians. The fact that a similar condition (with $U=V$) is related to the KP equation is discussed by van Geemen and van der Geer \cite{vgvdg}, observation 4.10.

\begin{thm}[Main theorem]\label{main}
This special case (the symmetric matrix $\lbrace c_{ij}\rbrace$ having rank at most two, so that we can write $c=(U\otimes V+V\otimes U)/2$) of the $\Gamma_{00}$ conjecture characterizes Jacobians, i.e. an indecomposable ppav $(A,\Theta)$ is a Jacobian if and only if there exist a point $P\in A\setminus\lbrace 0\rbrace,$ vectors  $U,V\in\C^g$, and a constant $c\in\C$ such that
\begin{equation}\label{gamma00}
  K(P)=c K(0)+\p_U\p_V K(0).
\end{equation}
\end{thm}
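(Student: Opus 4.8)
The ``only if'' implication is Proposition \ref{jacOk}: for a Jacobian choose distinct points $p\neq q$ on the curve, so that $P:=p-q\neq0$, and absorb the constant $b$ appearing there into a rescaling of $V$; this produces data $P,U,V,c$ satisfying (\ref{gamma00}). All the content is therefore in the ``if'' direction. The plan is to translate the linear-dependence condition (\ref{gamma00}) into an analytic identity for the theta function, restrict that identity to the theta divisor, and recognize the outcome as the difference-differential equation under which Krichever \cite{krtri} characterizes Jacobians.

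The first step is to pass from the Kummer coordinates to products of theta functions through the classical addition formula
$$
  \sum_{\e\in(\Z/2\Z)^g}\Theta[\e](z)\,\Theta[\e](w)=\theta(z+w)\,\theta(z-w)
$$
(valid up to a fixed nonzero normalization). This realizes $K(z)\cdot K(w):=\sum_\e\Theta[\e](z)\Theta[\e](w)=\theta(z+w)\theta(z-w)$ as a bilinear pairing, and since the second-order theta functions are linearly independent the vectors $K(y)$ span $\C^{2^g}$ as $y$ varies; hence (\ref{gamma00}) is equivalent to the scalar identity $K(P)\cdot K(y)=c\,K(0)\cdot K(y)+\partial_U\partial_V K(0)\cdot K(y)$ for all $y$. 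Computing the three pairings, and using that $\theta$ is even (so its gradient is odd and its Hessian even), this becomes
$$
  \theta(y+P)\,\theta(y-P)=c\,\theta(y)^2+2\,\theta(y)\,\partial_U\partial_V\theta(y)-2\,\partial_U\theta(y)\,\partial_V\theta(y).
$$
The rank-at-most-two hypothesis enters decisively here: writing the symmetric matrix $\{c_{ij}\}$ as $(U\otimes V+V\otimes U)/2$ collapses the second-order term into the single product $\partial_U\theta\,\partial_V\theta$, in place of the intractable sum $\sum_{i,j}c_{ij}\,\partial_i\theta\,\partial_j\theta$ that a higher-rank matrix would force.

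Restricting to the theta divisor $\{y:\theta(y)=0\}$ then annihilates the two terms carrying an explicit factor $\theta(y)$, and leaves
$$
  \theta(y+P)\,\theta(y-P)=-2\,\partial_U\theta(y)\,\partial_V\theta(y)\qquad\text{whenever }\theta(y)=0.
$$
This is exactly the type of difference-differential equation that Krichever studies on the theta divisor in his treatment of the semidegenerate trisecant conjecture \cite{krtri}. Before applying his theorem one should clear away the degenerate possibilities: if $U=0$ or $V=0$ the derivative term vanishes and (\ref{gamma00}) reduces to $K(P)=cK(0)$, forcing $P=0$ since the Kummer map embeds $A/\pm1$, contrary to hypothesis; the indecomposability of $(A,\Theta)$ is assumed and is precisely what Krichever's hypothesis demands. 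Granting the identification, Krichever's theorem yields that $(A,\Theta)$ is a Jacobian.

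The principal obstacle is this final matching. One must verify that the equation just obtained on the theta divisor coincides --- exactly, and together with whatever non-triviality of the secant and tangent data Krichever requires --- with the hypothesis of his semidegenerate trisecant theorem, and in particular that passing to the theta divisor (and thereby discarding the identity valid for all $y$) preserves exactly the information his proof consumes; the case $U\parallel V$, where $\{c_{ij}\}$ has rank one, must likewise be seen to fall under the appropriate case of his results. Conceptually the subtle point is the one flagged in the Remark: condition (\ref{gamma00}) looks \emph{more} degenerate than the existence of a semidegenerate trisecant, so there is no geometric reason a priori for it to imply one. The argument sidesteps this by never degenerating families of trisecants at all --- the purely algebraic pairing-and-restriction computation manufactures Krichever's equation directly --- and the residual work is to confirm that the two independent tangent directions $U,V$ of Proposition \ref{jacOk} are accommodated and that no spurious solutions are introduced.
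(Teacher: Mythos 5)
Your first step---pairing (\ref{gamma00}) with $K(z)$ via Riemann's bilinear addition theorem to obtain the identity
$$
\theta(z+P)\theta(z-P)=c\,\theta^2(z)+2\theta(z)\p_U\p_V\theta(z)-2\p_U\theta(z)\p_V\theta(z)
$$
valid for all $z$---is exactly the paper's first step, and is correct. But your next move, naively restricting this identity to $\lbrace\theta(z)=0\rbrace$ to get $\theta(z+P)\theta(z-P)=-2\p_U\theta(z)\p_V\theta(z)$ on the theta divisor, produces an equation that is \emph{not} the hypothesis of Krichever's semidegenerate trisecant theorem; your closing paragraph concedes that the ``final matching'' is unverified, and in fact it fails. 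Krichever's equation (1.7) in \cite{krtri}---the paper's equation (\ref{thetaidentity})---reads
$$
\theta_{UU}(z)\theta(z-P)\theta(z+P)=\theta_U(z)\theta_U(z-P)\theta(z+P)+\theta_U(z)\theta_U(z+P)\theta(z-P)
$$
on $\lbrace\theta(z)=0\rbrace$: it involves the second derivative $\theta_{UU}$, first derivatives of $\theta$ at the shifted points $z\pm P$, and only the single direction $U$ (the direction $V$ does not appear at all). Your restricted equation has neither second derivatives nor derivatives at $z\pm P$, treats $U$ and $V$ symmetrically, and cannot be transformed into Krichever's equation by any algebraic manipulation; no theorem of \cite{krtri} takes your equation as input.

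The missing content is precisely the technical core of the paper. One sets $\tau(x,y,t)=\theta(Ux+Vy+Pt+Z)$, $u=2\p_x\p_y\ln\tau$, $\psi(x,y,t)=\tau(x,y,t)/\tau(x,y,t-1)$ as in (\ref{taudef}), (\ref{udef}), (\ref{psidef}), so that the functional equation becomes the difference-differential equation $(c+u-T)\psi=0$ of (\ref{upsi}). One then expands everything locally near a zero $x=\eta(y,t)$ of $\tau$: the potential $u$ has a double pole with leading coefficient $2\eta_y$, $\psi$ vanishes to first order with Taylor coefficients $a,b$, and $T\psi$ has a simple pole with coefficients $A,B$. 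Matching the singular parts of (\ref{upsi}) gives $2a\eta_y=A$ and $2b\eta_y=B$; eliminating the transversal datum $\eta_y$ yields $Ab=aB$, and computing $a,b,A,B$ in terms of theta functions produces exactly (\ref{thetaidentity}). Note that this uses strictly more than the restriction of the functional identity to the divisor: it exploits its behavior in an infinitesimal neighborhood of the divisor (which is where $\theta_{UU}$ and $\theta_U(z\pm P)$ come from), and the direction $V$ is consumed in the elimination of $\eta_y$, which is why it disappears from the final equation. Without this analysis---or some substitute derivation of (\ref{thetaidentity})---your reduction to \cite{krtri} does not go through, and the ``if'' direction remains unproved.
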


\section{$\Gamma_{00}$ conjecture as a difference-differential equation on the theta divisor}
Similar to the ideas of \cite{krflex},\cite{krtri}, and \cite{pryms} we first use Riemann's bilinear addition theorem
\begin{equation}\label{bilinear}
 \theta(z+Z)\theta(z-Z)=\sum\limits_{\e\in(\Z/2\Z)^g}\Theta[\e](z)\Theta[\e](Z) =:K(z)\cdot K(Z)
\end{equation}
(where the dot denotes the scalar product in $\C^{2^g}$, and we consider the lifting of the map $K$ to the universal cover $\C^g$ of $A$) to rewrite equation (\ref{gamma00}) as a functional equation for the theta function. Indeed, for arbitrary $z\in\C^g$ let us take the scalar product of (\ref{gamma00}) with $K(z)$ and apply (\ref{bilinear}) on both sides to get
$$
  \theta(z+P)\theta(z-P)=c\theta^2(z)+K(z)\cdot\p_U\p_V K(0).
$$
To express the last term in the equation in terms of the theta function, we take the $\p_U\p_V$ derivative of (\ref{bilinear}) in the $Z$ variable, obtaining
$$
 K(z)\cdot\p_U\p_V K(Z)=\p_U\p_V\theta(z+Z)\,\theta(z-Z)+ \theta(z+Z)\,\p_V\p_V\theta(z-Z)
$$
$$
 -\p_U\theta(z+Z)\p_V\theta(z-Z)-\p_V\theta (z+Z)\p_U\theta(z-Z).
$$
Setting now $Z=0$ we get
$$
  K(z)\cdot\p_U\p_V K(0)=2 \theta(z)\p_U\p_V\theta(z)-2\p_U\theta (z)\p_V\theta(z).
$$
We now substitute this expression back in to get
$$
  \theta(z+P)\theta(z-P)=c\theta^2(z)+2\theta(z)\p_U\p_V\theta(z)-2 \p_U\theta(z)\p_V\theta(z).
$$
We will now replace the argument $z$ by $Ux+Vy+Pt+Z$, for $x,y,t\in\C$, and denote
\begin{equation}\label{taudef}
 \tau(x,y,t):=\theta(Ux+Vy+Pt+Z);
\end{equation}
\begin{equation}\label{udef}
 u(x,y,t):=2\p_x\p_y\ln\tau(x,y,t),
\end{equation}
so that the above equation becomes
$$
 \tau(x,y,t+1)\tau(x,y,t-1)=\tau^2(x,y,t)\left(c+u(x,y,t)\right).
$$
Dividing through by $\tau(x,y,t)\tau(x,y,t-1)$ and denoting
\begin{equation}\label{psidef}
 \psi(x,y,t):=\frac{\tau(x,y,t)}{\tau(x,y,t-1)}
\end{equation}
we get the following
\begin{prop}
The assumption of the main theorem, i.e. equation {\rm (\ref{gamma00})} having a solution, is equivalent to the equation
\begin{equation}\label{upsi}
  (c+u(x,y,t)-T)\psi(x,y,t)=0
\end{equation}
being satisfied with $\tau,u,$ and $\psi$ given by (\ref{taudef}),(\ref{udef}),(\ref{psidef}), with $T=e^{\p_t}$ being the operator of shifting $t$ by $1$.
\end{prop}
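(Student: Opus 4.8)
The plan is to establish the equivalence by observing that every manipulation carried out in the discussion preceding the statement is in fact reversible, so that the displayed chain of identities can be read equally well from (\ref{upsi}) back to (\ref{gamma00}). The only step that is not purely formal is the very first one, and it is here that the geometry enters: to pass from the vector identity (\ref{gamma00}) in $\C^{2^g}$ to a scalar identity one pairs with $K(z)$ and applies (\ref{bilinear}). Since the Kummer variety $K(A)$ is nondegenerate, i.e. spans $\P^{2^g-1}$, the vectors $K(z)$, as $z$ ranges over $\C^g$, span all of $\C^{2^g}$; hence two vectors of $\C^{2^g}$ agree if and only if they have the same scalar product with $K(z)$ for every $z$. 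This upgrades the pairing from an implication to an equivalence, and shows that (\ref{gamma00}) holds if and only if
\begin{equation*}
 \theta(z+P)\theta(z-P)=c\,\theta^2(z)+2\theta(z)\p_U\p_V\theta(z)-2\p_U\theta(z)\p_V\theta(z)
\end{equation*}
holds for all $z\in\C^g$, the last two terms arising from differentiating (\ref{bilinear}) in $Z$ and setting $Z=0$ exactly as above.

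Next I would make the substitution $z=Ux+Vy+Pt+Z$. The point to record is that it loses no information: the map $(x,y,t,Z)\mapsto Ux+Vy+Pt+Z$ is surjective onto $\C^g$ (indeed already for $x=y=t=0$ as $Z$ varies), so requiring the scalar identity for all $z$ is equivalent to requiring its pullback to hold for all $(x,y,t)$ and all $Z$. Under this substitution the chain rule turns $\p_U\theta,\p_V\theta,\p_U\p_V\theta$ into $\p_x\tau,\p_y\tau,\p_x\p_y\tau$ and $\theta(z\pm P)$ into $\tau(x,y,t\pm1)$, so the identity becomes
\begin{equation*}
 \tau(x,y,t+1)\tau(x,y,t-1)=c\,\tau^2+2\tau\,\p_x\p_y\tau-2\,\p_x\tau\,\p_y\tau .
\end{equation*}

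It remains to reorganize the right-hand side and divide. The logarithmic-derivative identity $u\,\tau^2=(2\p_x\p_y\ln\tau)\,\tau^2=2\tau\,\p_x\p_y\tau-2\,\p_x\tau\,\p_y\tau$, immediate from (\ref{udef}), turns the right-hand side into $\tau^2(c+u)$, giving the form displayed in the text. Dividing by $\tau(x,y,t)\tau(x,y,t-1)$ --- legitimate on the dense open set where $\tau$ is nonzero, hence everywhere by analytic continuation --- and recognizing the two resulting ratios as $\psi(x,y,t+1)$ and $\psi(x,y,t)$ via (\ref{psidef}) yields $\psi(x,y,t+1)=(c+u)\,\psi(x,y,t)$, which is exactly $(c+u-T)\psi=0$ since $T=e^{\p_t}$ shifts $t$ by $1$; reading the chain backwards proves the converse. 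I expect the only genuine subtlety to be the nondegeneracy of the Kummer variety invoked in the first step, which is what makes the passage an equivalence rather than a mere implication; the remainder is bookkeeping of reversibility together with the harmless division by $\tau$ handled by analyticity.
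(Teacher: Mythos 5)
Your proposal is correct and follows essentially the same route as the paper: pair (\ref{gamma00}) with $K(z)$ via Riemann's bilinear relation (\ref{bilinear}), differentiate (\ref{bilinear}) in $Z$ to express $K(z)\cdot\p_U\p_V K(0)$ through theta functions, restrict to $z=Ux+Vy+Pt+Z$, and divide by $\tau(x,y,t)\tau(x,y,t-1)$. Your explicit justification of the reverse implication --- that the identity can be traced backwards because the second-order theta functions $\Theta[\e]$ are linearly independent (so the vectors $K(z)$ span $\C^{2^g}$) and because $(x,y,t,Z)\mapsto Ux+Vy+Pt+Z$ is surjective --- is a welcome sharpening of what the paper leaves implicit in asserting the equivalence.
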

Similar to the previous work on the subject, we now investigate the condition for equation (\ref{upsi}) to have any solution with $\psi(x,y,t)$ having a simple pole along the divisor of $\tau(x,y,t-1)$ --- without requiring $\psi$ to have the exact form of (\ref{psidef}).
\begin{prop}
If for an indecomposable ppav equation {\rm (\ref{gamma00})} is satisfied then for any $z$ on the theta divisor
\begin{equation}\label{thetaidentity}
\theta_{UU}(z)\theta(z-P)\theta(z+P)=\theta_U(z)\theta_U(z-P)\theta(z+P) +\theta_U(z)\theta_U(z+P)\theta(z-P),
\end{equation}
where the subscript $U$ denotes the directional derivative in direction $U$ with respect to $z$.
\end{prop}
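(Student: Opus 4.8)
The starting point is the functional equation already obtained from the hypothesis (\ref{gamma00}) in the text,
\[
 \theta(z+P)\theta(z-P)=c\,\theta^2(z)+2\theta(z)\theta_{UV}(z)-2\theta_U(z)\theta_V(z),
\]
where $\theta_U,\theta_V,\theta_{UV},\theta_{UU}$ denote directional derivatives at $z$. Conceptually, (\ref{thetaidentity}) is the residue/compatibility condition guaranteeing that (\ref{upsi}) admits a $\psi$ with only a simple pole along the zero divisor of $\tau(x,y,t-1)$: the double pole that $u=2\p_x\p_y\ln\tau$ produces along $\{\tau=0\}$ must collapse to a simple pole, and the computation below is what that collapse amounts to. The plan is to run the whole argument on the theta divisor $\Theta=\{\theta=0\}$, since that is where the multiplicity-$4$ vanishing is concentrated.

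First I would restrict the displayed equation to $\Theta$. The two terms carrying an explicit factor $\theta(z)$ vanish, leaving
\[
 \theta(z+P)\theta(z-P)=-2\theta_U(z)\theta_V(z)\qquad(z\in\Theta).
\]
Then I would differentiate the functional equation once in the direction $U$ \emph{before} restricting. On the right-hand side the two terms $\pm2\theta_U\theta_{UV}$ cancel, and of the remaining terms only $-2\theta_{UU}\theta_V$ survives restriction to $\Theta$ (the others still carry a factor $\theta(z)$), so that
\[
 \theta_U(z+P)\theta(z-P)+\theta(z+P)\theta_U(z-P)=-2\theta_{UU}(z)\theta_V(z)\qquad(z\in\Theta).
\]

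The last step eliminates the auxiliary vector $V$, which is exactly why the final identity involves $U$ alone. Multiplying the first restricted identity by $\theta_{UU}(z)$ and the second by $\theta_U(z)$ makes both right-hand sides equal to $-2\,\theta_{UU}(z)\theta_U(z)\theta_V(z)$; equating the two left-hand sides gives precisely (\ref{thetaidentity}). Since this conclusion only substitutes the two restricted identities into one another, with no division performed, it holds at every point of $\Theta$, including points where $\theta_U$ or $\theta_V$ vanishes, and it does not use indecomposability of $(A,\Theta)$ (that hypothesis enters only later, in the reduction to Krichever's theorem). I expect the sole delicate point to be the bookkeeping in the differentiation step: checking which monomials retain a factor of $\theta(z)$ and hence die on $\Theta$, and confirming that the two auxiliary identities genuinely share the common right-hand side $-2\theta_{UU}\theta_U\theta_V$ with identical constants, so that the elimination is exact rather than merely formal.
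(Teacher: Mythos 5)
Your proof is correct, but it takes a genuinely different route from the paper's. The paper works with the difference-differential equation (\ref{upsi}): it expands $u$, $\psi$, and $T\psi$ in Laurent series in $x-\eta$ near a zero $x=\eta(y,t)$ of $\tau$, extracts the relations $2a\eta_y=A$ and $2b\eta_y=B$ from the vanishing of the singular and constant terms, eliminates $\eta_y$ to get $Ab=aB$, and then computes the four coefficients in terms of $\theta,\theta_U,\theta_{UU}$ at $z$ and $z\pm P$; this requires working at a generic point of $\Theta$ where $\theta(z\pm P)\ne 0$ (which is exactly where indecomposability is invoked) and then spreading the identity to all of $\Theta$. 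You instead differentiate the functional equation $\theta(z+P)\theta(z-P)=c\,\theta^2(z)+2\theta(z)\theta_{UV}(z)-2\theta_U(z)\theta_V(z)$ once in the direction $U$, restrict it and its derivative to $\Theta$, and eliminate $\theta_V$ by cross-multiplication. I checked the bookkeeping: the terms $\pm 2\theta_U\theta_{UV}$ do cancel, every discarded term carries a factor of $\theta(z)$, and both restricted identities share the right-hand side $-2\theta_{UU}(z)\theta_U(z)\theta_V(z)$, so the elimination is exact and the conclusion is precisely (\ref{thetaidentity}). Your route is shorter and, as you observe, division-free, hence valid pointwise on all of $\Theta$ with no genericity or indecomposability hypothesis needed at this stage --- a minor strengthening of the proposition as stated. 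What the paper's longer route buys is the explicit link to the operator equation $(c+u-T)\psi=0$ and its pole structure, which is the formulation paralleling Krichever's framework and the one whose failure for coefficient matrices of rank greater than two is analyzed in the final remark; your computation reaches the identity itself more efficiently but bypasses that structure.
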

\begin{proof}
To show this, suppose that the function $\tau(x,y,t)$ locally has a root at $x=\eta(y,t)$, where $\eta(y,t)$ is some function, and suppose that locally $\tau(\eta(y,t),y,t-1)\tau(\eta(y,t),y,t+1)\ne 0$ (this is true generically on the theta divisor $\Theta$, since for indecomposable ppavs we would otherwise have $\Theta=\Theta+P$ or $\Theta=\Theta-P$, but the corresponding line bundles are not linearly equivalent). Then locally near the point $x=\eta(y,t)$ from the Taylor expansion
$$
  \tau(x,y,t)=\alpha(x-\eta)+O((x-\eta)^2)
$$
where $\alpha\in\C$ we get from (\ref{udef}) the Taylor series
$$
 u(x,y,t)=2\p_x\p_y\ln\tau(x,y,t)=2\p_x\p_y\ln(\alpha(x-\eta)+O((x-\eta)^2)
$$
$$
 =2\p_y\left(\frac{1}{x-\eta}+O(1)\right)=  \frac{2\eta_y}{(x-\eta)^2}+O(1),
$$
while from (\ref{psidef}) we obtain
$$
 \psi(x,y,t)=\frac{\tau(x,y,t)}{\tau(x,y,t-1)}=a(x-\eta)+b(x-\eta)^2+O((x-\eta)^3),
$$
$$
 T\psi(x,y,t)=\frac{\tau(x,y,t+1)}{\tau(x,y,t)}=\frac{A}{x-\eta}+B+O(x-\eta),
$$
where $a,b,A,B$ are in fact some values of derivatives that we will eventually need to compute. When we substitute all of these expansions into the left-hand-side of (\ref{upsi}) and expand, we get
$$
 (c+u-T)\psi=\frac{2\eta_y}{(x-\eta)^2}\,a(x-\eta)-\frac{A}{x-\eta}+2b\eta_y-B+O(x-\eta).
$$
By equation (\ref{upsi}) this is identically zero, and so we must have
$$
 2a\eta_y=A;\qquad 2b\eta_y=B.
$$
We eliminate $\eta_y$ from these expressions to get the relation $Ab=aB$. Now compute the coefficients $a,b,A,B$ in the Taylor expansions. It is notationally convenient to go back from $\tau$ to $\theta$ at this point, noting that $\tau_x=\theta_U$, and to denote $z:=Ux+Vy+Pt+Z$ the point lying on the theta divisor (for $x=\eta(y,t)$). We then have by differentiating (\ref{psidef})
$$
 a=\p_x\psi(x,y,t)|_{x=\eta(y,t)}=\p_x\frac{\tau(x,y,t)}{\tau(x,y,t-1)}|_{x=\eta(y,t)}
$$
$$ =\p_U\frac{\theta(z)}{\theta(z-P)}|_{\lbrace \theta(z)=0\rbrace}=
 \frac{\theta_U(z)}{\theta(z-P)},
$$
$$
  b=\p_x\p_x\psi(x,y,t)|_{x=\eta(y,t)}=\p_U\left( \frac{\theta_U(z))}{\theta(z-P)}-\frac{\theta_U(z-P)\theta(z)} {\theta^2(z-P)}\right)|_{\lbrace \theta(z)=0\rbrace}
$$
$$
  = \frac{\theta_{UU}(z)}{\theta(z-P)}- 2\frac{\theta_U(z-P)\theta_U(z)}{\theta^2(z-P)}
$$
(notice that one term vanished in each formula since $\tau(\eta,y,t)=0$). To compute $A$ and $B$, it is convenient to think of the expansion of $$
 (T\psi(x,y,t))^{-1}=\frac{\tau(\eta,y,t)}{\tau(\eta,y,t+1)},
$$
which is the same as above with $z-P$ replaced by $z+P$, while on the other hand it is given by
$$
 (T\psi(x,y,t))^{-1}=\frac{1}{A}(x-\eta)-\frac{B}{A^2}(x-\eta)+O((x-\eta)^3).
$$
This yields
$$
 \frac{1}{A}=\frac{\theta_U(z)}{\theta(z+P)};\quad -\frac{B}{A^2}=\frac{\theta_{UU}(z)}{\theta(z+P)}- 2\frac{\theta_U(z+P)\theta_U(z)}{\theta^2(z+P)}
$$
Substituting all of these back into $Ab=aB$, which it is convenient to write as $\frac{b}{a}=A\frac{B}{A^2}$, yields
$$
\frac{\theta(z-P)}{\theta_U(z)}\left(\frac{\theta_{UU}(z)}{\theta(z-P)}- 2\frac{\theta_U(z-P) \theta_U(z)}{ \theta^2(z-P)}\right)
$$
$$=
-\frac{\theta(z+P)}{\theta_U(z)}\left(\frac{\theta_{UU}(z)}{\theta(z+P)}- 2\frac{\theta_U(z+P) \theta_U(z)}{ \theta^2(z+P)}\right)
$$
which upon clearing the denominators gives the required identity on the theta divisor $\lbrace\theta(z)=0\rbrace$.
\end{proof}

\begin{proof}[Proof of the main theorem]
We note that up to replacing $U$ by $V$ equation (\ref{thetaidentity}) is identical to equation (1.7) in \cite{krtri}, which is shown there to be equivalent to the semidegenerate case of the trisecant conjecture. In \cite{krtri} it is then shown that the condition of having a semidegenerate trisecant characterizes Jacobians, and thus (\ref{thetaidentity}) is equivalent to the ppav being a Jacobian (notice that we also obtain anindirect proof of proposition \ref{jacOk} in this way.
\end{proof}

\begin{rem}
The original statement of the $\Gamma_{00}$ conjecture can also be reformulated in terms of the theta function. Indeed, if the matrix of coefficients of linear dependency is of rank $N$, i.e. if we have
$$
 c_{ij}=\sum_{n=1}^N U^{(n)}_i V^{(n)}_i+V^{(n)}_i U^{(n)}_i,
$$
then the functions $\tau$ and $\psi$ would be obtained similarly, with
$$
  \tau(x_1,y_1,\ldots,x_n,y_n,t):=\theta\left(\sum\limits_{n=1}^N U^{(n)}x_n+V^{(n)}y_n+Pt+Z\right),
$$
instead of (\ref{taudef}) and $\psi$ still given by (\ref{psidef}), while $u$ will take the form
$$
  u(x,y,t):=2\sum\limits_{n=1}^N\p_{x_n}\p_{y_n}\ln\tau(x_1,y_1,\ldots,x_n,y_n,t),
$$
instead of (\ref{udef}), and equation (\ref{upsi}) from the main theorem will then still be satisfied. However, in this case the Taylor expansion for $u$ in terms of $x_1$ near $x_1=\eta(y_1,\ldots,x_n,y_n,t)$ would contain a term
$$
 \sum_{n=2}^N\frac{\eta_{x_ny_n}}{x_1-\eta},
$$
of order $-1$, which would result in the computations to completely collapse. At the moment we do not see any way to extend our approach to prove the original $\Gamma_{00}$. Note that when thinking of $\Gamma_{00}$ as a limiting case of semidegenerate trisecants, following proposition \ref{jacOk}, we only get our special case, and thus potentially there could be abelian varieties that are not Jacobians violating the original $\Gamma_{00}$ conjecture (though of course at the moment we are not aware of any such examples or candidates).

Note that for $g=4$ the matrix of coefficients $c_{ij}$ of the linear dependence for the two extra points $\pm (a-a')$, where $a,a'$ are the $g_3^1$'s on the curve (see \cite{W2}), is of rank greater than two.
\end{rem}

\section*{Acknowledgements}
We are grateful to Igor Krichever for discussions on using integrable systems techniques for characterizing certain loci of abelian varieties, and to Gerard van der Geer for comments on the $\Gamma_{00}$ conjecture and on the first version of this text. We would like to thank Jos\'e Mar\'\i a Mu\~noz Porras for pointing out that in theorem 1 the matrix of coefficients, being symmetric, can be allowed to be of rank two and not only one.


\begin{thebibliography}{}
\bibitem{AM} Andreotti, A.; Mayer, A. L.: {\it On period relations for abelian integrals on algebraic curves.} Ann. Scuola Norm. Sup. Pisa (3) {\bf 21} (1967), 189--238.
\bibitem{4p}Beauville, A., Debarre, O., Donagi, R., van der Geer, G. {\it Sur les fonctions th\^eta d'ordre deux et les singularit\'es du diviseur th\^eta.} C. R. Acad. Sci. Paris S\'er. I Math. {\bf 307} (1988), 481--484.
\bibitem{de}Debarre, O.: {\it Seshadri constants of abelian varieties.}  The Fano Conference, 379--394, Univ. Torino, Turin,
    2004.
\bibitem{do}Donagi, R.: {\it The Schottky problem,} Theory of Moduli, Lecture Notes in Mathematics 1337, Springer-Verlag 1988, 84--137.
\bibitem{du}Dubrovin, B.: {\it The Kadomcev-Petviashvili Equation and the relations between the periods of holomorphic differentials on Riemann surfaces,} Math. USSR Izvestija {\bf 19} (1982), 285--296.
\bibitem{F}Fay, J.: Theta functions on Riemann surfaces. Lecture Notes in Mathematics, Vol. 352. Springer-Verlag, Berlin-New York, 1973.
\bibitem{F2}Fay, J.: {\it On the even-order vanishing of Jacobian theta functions.} Duke Math. J. {\bf 51} (1984), 109--132
\bibitem{vgvdg} van Geemen, B., van der Geer, G.: {\it Kummer    varieties and the moduli spaces of abelian varieties}, Amer. J.
    of Math. {\bf 108} (1986), 615--642.
\bibitem{pryms} Grushevsky, S., Krichever, I. {\it Integrable discrete Schr\"odinger equations and a characterization of Prym varieties by a pair of quadrisecants}, preprint arXiv:0705.2829
\bibitem{G}Gunning, R. C.: {\it Some curves in abelian varieties.} Invent. Math. {\bf 66} (1982), 377--389.
\bibitem{G2}Gunning, R. C.: {\it Some identities for abelian integrals.} Amer. J.  of Math. {\bf 108} (1986), 39--74.
\bibitem{I1}Izadi, E.: {\it  Fonctions th\^eta du second ordre sur la Jacobienne d'une courbe lisse.}  Math. Ann.  {\bf 289}  (1991),  189--202.
\bibitem{I2} Izadi, E. {\it The geometric structure of $\A_4$, the structure of the Prym map, double solids and $\Gamma_{00}$-divisors.}  J. Reine Angew. Math.  {\bf 462}  (1995), 93--158
\bibitem{I3} Izadi, E.: {\it Second order theta divisors on Pryms.} Bull. Soc. Math. France  {\bf 127} (1999),  1--23.
\bibitem{krflex} Krichever, I.: {\it Integrable linear equations and the Riemann-Schottky problem.} in Algebraic geometry and number theory, 497--514, Progr. Math., 253, Birkh\"auser, Boston, MA, 2006.
\bibitem{krtri}Krichever, I.: {\it Characterizing Jacobians via trisecants of the Kummer Variety.} Ann. Math., to appear; math.AG/0605625.
\bibitem{M}Matsusaka, T.: {\it On a characterization of a Jacobian variety.} Mem. Coll. Sc. Kyoto, Ser. A  {\bf 23} (1959), 1--19.
\bibitem{R}Ran, Z.: {\it On subvarieties of abelian varieties.} Invent. Math. {\bf 62} (1981), 459--479.
\bibitem{S}Shiota, T.: {\it Characterization of Jacobian varieties in terms of soliton equations.} Invent. Math. {\bf 83} (1986), 333--382.
\bibitem{W}Welters, G.: {\it A criterion for Jacobi varieties.} Ann. Math. {\bf 120} (1984), 497--504.
\bibitem{W2}Welters, G.: {\it The surface $C-C$ on Jacobi varieties and second order theta functions.} Acta Math. {\bf 157} (1986), 1--22.
\end{thebibliography}
\end{document}